\theoremstyle{plain}
\newtheorem*{theorem*}{Theorem}
\theoremstyle{definition}
\newtheorem*{remark*}{Remark}
\begin{document}
\title{A note on a conjecture by Ulas on polynomial substitutions}
\author{Peter M\"uller}
\address{Institut f\"ur Mathematik\\
  Universit\"at W\"urzburg\\
  97074 W\"urzburg\\
  Germany}
\email{peter.mueller@mathematik.uni-wuerzburg.de}
\begin{abstract}
We prove a recent conjecture by Ulas on reducible polynomial substitutions.
\end{abstract}
\maketitle
We prove the following result, which was conjectured by Ulas in
\cite{ulas___polysubs} and proven for $d\le4$ there.
\begin{theorem*}
  Let $f(X)$ be an irreducible polynomial of degree $d\ge3$ over a
  field $K$. Then there is a polynomial $h(X)\in K[X]$ of degree $\le
  d-1$ such that $f(h(X))$ is reducible over $K$.
\end{theorem*}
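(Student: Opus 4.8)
The plan is to use the well-known connection between the factorization of $f(h(X))$ over $K$ and that of $h(X)-\alpha$ over $L:=K(\alpha)$, where $\alpha$ is a root of $f$, so $[L:K]=d$. First I would record the one implication that is needed: \emph{if $h\in K[X]$ is such that $h(X)-\alpha$ is reducible over $L$, then $f(h(X))$ is reducible over $K$.} To prove it, factor $h(X)-\alpha$ into irreducibles over $L$; being reducible, it has an irreducible factor $p$ with $1\le\deg p\le\deg h-1$, so in particular $\deg h\ge2$. Let $\beta$ be a root of $p$, so $[L(\beta):L]=\deg p$; since $h(\beta)=\alpha$ we get $L=K(\alpha)\subseteq K(\beta)$, hence $K(\beta)=L(\beta)$ and $[K(\beta):K]=\deg(p)\cdot d\le(\deg h-1)\,d<(\deg h)\,d=\deg f(h(X))$. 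As $\beta$ is a root of $f(h(X))$, its minimal polynomial over $K$ is a proper divisor of $f(h(X))$, which is therefore reducible over $K$. So it suffices to produce $h\in K[X]$ of degree at most $d-1$ with $h(X)-\alpha$ reducible over $L$.

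To build such an $h$, the plan is to find $\beta\in L$ with $K(\beta)=L$ and $\beta\notin K+K\alpha$. Given such a $\beta$, the powers $1,\beta,\dots,\beta^{d-1}$ form a $K$-basis of $L$, so $\alpha=\sum_{i=0}^{d-1}c_i\beta^i$ with $c_i\in K$; let $m$ be the largest index with $c_m\ne0$. Then $m\ge1$ because $\alpha\notin K$, and $m=1$ is excluded because it would give $\beta=(\alpha-c_0)/c_1\in K+K\alpha$; hence $2\le m\le d-1$. Put $h(X):=\sum_{i=0}^{m}c_iX^i\in K[X]$. Then $\deg h=m\le d-1$ and $h(\beta)=\alpha$, so $X-\beta$ divides $h(X)-\alpha$ in $L[X]$ with a cofactor of degree $m-1\ge1$; thus $h(X)-\alpha$ is reducible over $L$, and the previous step finishes the argument.

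The main obstacle is the existence of such a $\beta$: a primitive element of $L/K$ outside the $K$-subspace $V:=K+K\alpha$, which is proper because $\dim_K V\le2<d$ --- this is where $d\ge3$ is used. Since $L=K(\alpha)$ is a simple extension it has only finitely many intermediate fields, and the elements of $L$ that are not primitive for $L/K$ are exactly those lying in some proper intermediate field; hence $V$ together with these fields is a finite union of proper $K$-subspaces of $L$. If $K$ is infinite, no such union exhausts $L$, so $\beta$ exists. If $K$ is finite with $|K|=q$, I would count: $|L|=q^d$, each proper subfield has $q^e$ elements with $e\mid d$, $e<d$, hence sits inside one with $q^{d/p}$ elements for a prime $p\mid d$, and since the exponents $d/p$ are distinct positive integers not exceeding $d/2$, the union of all proper subfields has fewer than $\sum_{k=1}^{\lfloor d/2\rfloor}q^k<q^{d-1}$ elements. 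Therefore $L/K$ has more than $q^d-q^{d-1}=q^{d-1}(q-1)\ge q^{d-1}\ge q^2=|V|$ primitive elements, so at least one of them lies outside $V$. The only slightly delicate point is checking these inequalities in the smallest cases, e.g.\ $d=3$ and $q=2$; the rest is routine.
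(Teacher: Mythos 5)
Your argument is correct: the transfer lemma (reducibility of $h(X)-\alpha$ over $L=K(\alpha)$ forces reducibility of $f(h(X))$ over $K$), the construction of $h$ from a primitive element $\beta\notin K+K\alpha$, and the existence argument for such a $\beta$ (finiteness of intermediate fields of a simple extension, avoidance of a finite union of proper subspaces when $K$ is infinite, and the subfield count when $K$ is finite) all check out, including the borderline case $d=3$, $q=2$, where there are more than $q^{d-1}(q-1)=4=|K+K\alpha|$ primitive elements. The underlying mechanism is the same as in the paper: write $\alpha=h(\beta)$ with $\beta$ primitive and $2\le\deg h\le d-1$; then the minimal polynomial of $\beta$, of degree $d$, divides $f(h(X))$, of degree $d\cdot\deg h\ge2d$, so $f(h(X))$ is reducible. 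Where you genuinely diverge is in producing $\beta$. The paper simply takes $\beta=1/\alpha$: it generates $K(\alpha)$ automatically, and $\deg h\ge2$ is immediate because $\alpha=a/\alpha+b$ would force $[K(\alpha):K]\le2$, contradicting $d\ge3$; this makes the proof completely explicit (the remark even gives $h(X)=(1-X^df(1/X))/X$ for monic $f$) and uses nothing beyond the notion of minimal polynomial. Your proof replaces this single observation by a non-constructive existence statement, at the cost of invoking Steinitz's theorem on intermediate fields, the union-of-subspaces fact over infinite fields, and a separate counting argument over finite fields, with a case split the paper never needs. What your route buys is some extra generality: any primitive element outside $K+K\alpha$ works, and your first-paragraph lemma is the general principle relating factorizations of $h(X)-\alpha$ over $K(\alpha)$ to factorizations of $f(h(X))$ over $K$, which is of independent interest; but for the theorem itself the explicit choice $1/\alpha$ eliminates your entire existence paragraph.
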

\begin{proof}
  Let $\alpha$ be a root of $f(X)$ in some extension of $K$, and
  $g(X)$ be the minimal polynomial of $\frac{1}{\alpha}$ over $K$. (So
  $g(X)$ is, up to a non-zero factor from $K$, the reciprocal of
  $f(X)$.) Since $K(\frac{1}{\alpha})=K(\alpha)$, and
  $\frac{1}{\alpha}$ has degree $d$ over $K$, we have
  $\alpha=h(\frac{1}{\alpha})$ for some $h(X)\in K[X]$ of degree at
  most $d-1$.

We obtain $f(h(\frac{1}{\alpha}))=f(\alpha)=0$. So $f(h(X))$ shares
the root $\frac{1}{\alpha}$ with the irreducible polynomial $g(X)$,
thus $g(X)$ divides $f(h(X))$. The degree of $h$ is at least $2$, for
otherwise $\alpha$ would have degree at most $2$ over $K$, contrary to
$d\ge3$.

The assertion now follows from $\deg f(h(X))\ge2d$ and $\deg g(X)=d$.
\end{proof}
\begin{remark*}
  (a) Of course one can formulate the proof without reference to the
  algebraic element $\alpha$. Suppose without loss that $f(X)$ is
  monic. Set $g(X)=X^df(1/X)$ and $h(X)=(1-g(X))/X$. Then $h(X)$ is a
  polynomial, and $g(X)$ divides $f(h(X))$. This example is similar to
  the one by Schinzel in \cite[Lemma 10]{schinzel___gelfond}, which,
  as pointed out by Ulas \cite[page 59]{ulas___polysubs}, proves the
  theorem provided that $d-1$ does not divide the characteristic of
  $K$.

  (b) The answer to \cite[Question 5.5]{ulas___polysubs}, a kind of
  converse to the above theorem, is negative if $K=\mathbb R$ by lack
  of irreducible polynomials of degree $\ge3$.
\end{remark*}
\providecommand{\bysame}{\leavevmode\hbox to3em{\hrulefill}\thinspace}

\end{document}